\theoremstyle{plain}
\newtheorem{thm}{Theorem}[section]
\newtheorem{cor}[thm]{Corollary}
\theoremstyle{definition}
\numberwithin{equation}{section}
\begin{document}
\title{Quadratic forms and four partition functions modulo $3$}
\author{Jeremy Lovejoy and Robert Osburn}

\address{CNRS, LIAFA, Universit{\'e} Denis Diderot - Paris 7, Case 7014, 75205 Paris Cedex 13, FRANCE}

\address{School of Mathematical Sciences, University College Dublin, Belfield, Dublin 4, Ireland}

\email{lovejoy@liafa.jussieu.fr}

\email{robert.osburn@ucd.ie}

\subjclass[2000]{Primary: 11P83; Secondary: 11E25}
\keywords{partitions, overpartitions, congruences, binary quadratic forms, sums of squares}

\date{\today}

\begin{abstract}
Recently, Andrews, Hirschhorn and Sellers have proven congruences modulo $3$ for four types of partitions using elementary series manipulations.  In this paper, we generalize their congruences using arithmetic properties of certain quadratic forms.
\end{abstract}

\maketitle
\section{Introduction}
A partition of a non-negative integer $n$ is a non-increasing sequence whose sum is $n$. An overpartition of $n$ is a partition of $n$ where we may overline the first occurrence of a part. Let $\overline{p}(n)$ denote the number of overpartitions of $n$, $\overline{p_{o}}(n)$ the number of overpartitions of $n$ into odd parts, $ped(n)$ the number of partitions of $n$ without repeated even parts and $pod(n)$ the number of partitions of $n$ without repeated odd parts.  The generating functions for these partitions are
\begin{eqnarray}
\sum_{n \geq 0} \overline{p}(n)q^n &=& \frac{(-q;q)_{\infty}}{(q;q)_{\infty}}, \label{gf1}\\
\sum_{n \geq 0} \overline{p_o}(n)q^n &=& \frac{(-q;q^2)_{\infty}}{(q;q^2)_{\infty}}, \label{gf2}\\
\sum_{n \geq 0} ped(n)q^n &=& \frac{(-q^2;q^2)_{\infty}}{(q;q^2)_{\infty}}, \label{gf3}\\
\sum_{n \geq 0} pod(n)q^n &=& \frac{(-q;q^2)_{\infty}}{(q^2;q^2)_{\infty}} \label{gf4},
\end{eqnarray} 
where as usual 
$$
(a;q)_n := (1-a)(1-aq) \cdots (1-aq^{n-1}).
$$
The infinite products in \eqref{gf1}--\eqref{gf4} are essentially the four different ways one can specialize the product $(-aq;q)_{\infty}/(bq;q)_{\infty}$ to obtain a modular form whose level is relatively prime to $3$.  

A series of four recent papers examined congruence properties for these partition functions modulo $3$ \cite{An-Hi-Se1,Hi-Se1,Hi-Se2,Hi-Se3}.  Among the main theorems in these papers are the following congruences (see Theorem 1.3 in \cite {Hi-Se2}, Corollary 3.3 and Theorem 3.5 in \cite{An-Hi-Se1}, Theorem 1.1 in \cite{Hi-Se1} and Theorem 3.2 in \cite{Hi-Se3}, respectively).  For all $n \geq 0$ and $\alpha \geq 0$ we have

\begin{equation} \label{first}
\overline{p_o}(3^{2 \alpha}(An+B)) \equiv 0 \pmod{3},
\end{equation}

\noindent where $An+B = 9n+6$ or $27n+9$,

\begin{equation} \label{second1} 
ped\Bigl(3^{2\alpha + 3} n + \frac{17 \cdot 3^{2\alpha+2} - 1}{8}\Bigr) \equiv ped\Bigl(3^{2\alpha+2} n + \frac{19 \cdot 3^{2\alpha+1} - 1}{8}\Bigr) \equiv 0 \pmod 3,
\end{equation}


\begin{equation} \label{third}
\overline{p}(3^{2\alpha}(27n+18)) \equiv 0 \pmod{3}
\end{equation}

\noindent and

\begin{equation} \label{fourth}
pod\left(3^{2\alpha+3} + \frac{23\cdot3^{2\alpha+2} + 1}{8}\right) \equiv 0 \pmod{3}.
\end{equation} 

We note that congruences modulo $3$ for $\overline{p}(n)$, $\overline{p}_o(n)$ and $ped(n)$ are typically valid modulo $6$ or $12$.  The powers of $2$ enter trivially (or nearly so), however, so we do not mention them here. 

The congruences in \eqref{first}--\eqref{fourth} are proven in \cite{An-Hi-Se1,Hi-Se1,Hi-Se2,Hi-Se3} using elementary series manipulations.   If we allow ourselves some elementary number theory, we find that much more is true.  

With our first result we exhibit formulas for $\overline{p}_o(3n)$ and $ped(3n+1)$ modulo $3$ for all $n \geq 0$.  These formulas depend on the factorization of $n$, which we write as 
\begin{equation} \label{factor}
n=2^{a} 3^{b} \prod_{i=1}^{r} p_{i}^{v_{i}} \prod_{j=1}^{s} q_{j}^{w_{j}},
\end{equation}

\noindent where $p_{i} \equiv 1$, $5$, $7$ or $11 \pmod{24}$ and $q_{j} \equiv 13$, $17$, $19$ or $23 \pmod{24}$.  Further, let $t$ denote the number of prime factors of $n$ (counting multiplicity) that are congruent to $5$ or $11 \pmod{24}$. Let $R(n, Q)$ denote the number of representations of $n$ by the quadratic form $Q$.  

\begin{thm} \label{ourfirst}
For all $n \geq 0$ we have 
\begin{equation*}
\overline{p}_o(3n) \equiv f(n)R(n,x^2+6y^2) \pmod{3} 
\end{equation*}
\noindent and 
\begin{equation*}
ped(3n+1) \equiv (-1)^{n+1}R(8n+3,2x^2+3y^2) \pmod{3},
\end{equation*}
where $f(n)$ is defined by 
\begin{equation*}
f(n) = 
\begin{cases}
-1, & \text{$n \equiv 1,6,9,10 \pmod{12}$},\\
1, & \text{otherwise}.  
\end{cases}
\end{equation*}
\noindent Moreover, we have  
\begin{equation}  \label{formula1}
\overline{p}_o(3n)  \equiv f(n)(1 + (-1)^{a+b+t}) \prod_{i=1}^{r} (1+v_{i}) \prod_{j=1}^{s} \Biggl( \frac{1 + (-1)^{w_{j}}}{2} \Biggr) \pmod{3}
\end{equation}
\noindent and
\begin{equation} \label{formula2}
(-1)^{n}ped(3n+1) \equiv \overline{p}_o(48n+18) \pmod{3}.
\end{equation}
\end{thm}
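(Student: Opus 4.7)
My plan is to prove the two theta-series congruences first, then deduce \eqref{formula1} from the classical genus theory of discriminant $-24$ and \eqref{formula2} by directly comparing the two congruences. The workhorse throughout is the mod-3 identity $(q;q)_\infty^3\equiv(q^3;q^3)_\infty$, which turns each generating function into a ratio of theta functions amenable to a 3-dissection.

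For the $\overline{p}_o(3n)$ statement, I first rewrite
\[
\sum_{n\geq 0}\overline{p}_o(n)q^n=\frac{(-q;q^2)_\infty}{(q;q^2)_\infty}=\frac{\varphi(-q^2)}{\varphi(-q)},
\]
where $\varphi(-q)=\sum_{n\in\mathbb Z}(-1)^n q^{n^2}=(q;q^2)_\infty^2(q^2;q^2)_\infty$. The cube identity implies $\varphi(-q)^3\equiv\varphi(-q^3)\pmod 3$, whence
\[
\sum_{n\geq 0}\overline{p}_o(n)q^n\equiv\frac{\varphi(-q^2)\,\varphi(-q)^2}{\varphi(-q^3)}\pmod 3.
\]
Since $\varphi(-q^3)$ is a pure $q^3$-series, the residue-$0$-mod-$3$ part of the left side is obtained by extracting the residue-$0$-mod-$3$ part of $\varphi(-q^2)\varphi(-q)^2=\sum_{k,\ell,m\in\mathbb Z}(-1)^{k+\ell+m}q^{2k^2+\ell^2+m^2}$. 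A short case analysis of $2k^2+\ell^2+m^2\pmod 3$ shows that the contributing triples fall into two families (either $3\mid k,\ell,m$, or $3\nmid k$ with exactly one of $\ell,m$ divisible by $3$), each factoring via Jacobi's triple product. Substituting $q^3\mapsto q$, dividing by $\varphi(-q)$, and simplifying should yield $\sum f(n)R(n,x^2+6y^2)q^n$, noting $\varphi(q)\varphi(q^6)=\sum R(n,x^2+6y^2)q^n$; the sign $f(n)$ arises from the parity $(-1)^{k+\ell+m}$ viewed modulo $12$.

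For $ped(3n+1)$ I apply the same template to
\[
\sum_{n\geq 0}ped(n)q^n=\frac{(q^4;q^4)_\infty}{(q;q)_\infty},
\]
using $1/(q;q)_\infty\equiv(q;q)_\infty^2/(q^3;q^3)_\infty\pmod 3$ and then 3-dissecting $(q^4;q^4)_\infty(q;q)_\infty^2$ to isolate the $3n+1$ subseries. The generating function of $R(8n+3,2x^2+3y^2)$ equals $2\varphi(q)\psi(q^3)$ via the bijection $x=2u,\ y=2v+1$ (which reduces $2x^2+3y^2=8n+3$ to $u^2+3v(v+1)/2=n$), and the extracted series should match this up to the sign $(-1)^{n+1}$. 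Granting both theta congruences, \eqref{formula2} is immediate: since $x^2+6y^2=2(8n+3)$ forces $x$ even, writing $x=2x'$ gives $R(16n+6,x^2+6y^2)=R(8n+3,2x^2+3y^2)$, and $f(16n+6)\equiv -1\pmod 3$ for $n\not\equiv 2\pmod 3$, while for $n\equiv 2\pmod 3$ both sides vanish since $2x^2+3y^2\equiv 1\pmod 3$ has no solutions.

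Finally, \eqref{formula1} is deduced from the classical theory of binary quadratic forms of discriminant $-24$: the class number is $2$ with one class per genus, the two reduced forms being $x^2+6y^2$ (principal) and $2x^2+3y^2$. The genus character modulo $24$ assigns primes $p\equiv 1,7\pmod{24}$ to $x^2+6y^2$, primes $p\equiv 5,11\pmod{24}$ to $2x^2+3y^2$, and primes $q\equiv 13,17,19,23\pmod{24}$ to neither individually, with only their even powers represented (always by $x^2+6y^2$). Since both $R(n,x^2+6y^2)+R(n,2x^2+3y^2)$ and $R(n,x^2+6y^2)-R(n,2x^2+3y^2)$ are multiplicative functions of $n$, they factor over the decomposition \eqref{factor}, yielding the stated product. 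The principal obstacle will be the 3-dissection in paragraph two and its $ped$ analogue: carrying out the dissection via Jacobi triple products and correctly matching the mod-$12$ sign pattern $f(n)$ with the inherited $(-1)^{k+\ell+m}$ is the delicate point, after which the genus-theoretic consequence is routine.
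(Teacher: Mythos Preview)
Your route is sound but genuinely different from the paper's, and in one place longer than it needs to be.

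\textbf{The theta congruences.} The paper does not perform any 3-dissection at all. It simply quotes two \emph{exact} identities from Hirschhorn--Sellers and Andrews--Hirschhorn--Sellers,
\[
\sum_{n\ge 0}\overline{p}_o(3n)q^n=\frac{D(q^3)D(q^6)}{D(q)^2},\qquad
\sum_{n\ge 0}ped(3n+1)q^n=\frac{D(q^3)\psi(-q^3)}{D(q)^2},
\]
(with $D(q)=\varphi(-q)$), and then reduces $1/D(q)^2\equiv D(q)/D(q^3)\pmod 3$ to collapse each right side to $D(q)D(q^6)$ and $D(q)\psi(-q^3)$. From there the two theta congruences are immediate (for $ped$, after the substitution $q\mapsto -q^8$ and a shift by $q^3$). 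Your plan---reduce mod $3$ first and then $3$-dissect $\varphi(-q^2)\varphi(-q)^2$ and $(q^4;q^4)_\infty(q;q)_\infty^2$---is workable and your case split for the first product is correct, but carrying it through is precisely a mod-$3$ redo of the dissection identities the paper cites. So the ``principal obstacle'' you flag is real, and the paper avoids it entirely by citation.

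\textbf{Formula \eqref{formula1}.} The paper just invokes the Berkovich--Yesilyurt formula for $R(n,x^2+6y^2)$. Your genus-theory derivation (two classes, one per genus, multiplicativity of the sum and difference of representation numbers) is a valid self-contained substitute.

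\textbf{Formula \eqref{formula2}.} Here your argument is actually tidier than the paper's. The paper obtains \eqref{formula2} by writing down the Berkovich--Yesilyurt product for $R(n,2x^2+3y^2)$ and comparing it with \eqref{x26y2}. You instead use the direct bijection $R(16n+6,x^2+6y^2)=R(8n+3,2x^2+3y^2)$ (forced parities $x\equiv 0$, $y\equiv 1\pmod 2$), together with the sign check $f(16n+6)=-1$ for $n\not\equiv 2\pmod 3$ and the vanishing of both sides for $n\equiv 2\pmod 3$; this avoids appealing to the second explicit formula altogether.
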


There are many ways to deduce congruences from Theorem \ref{ourfirst}.  For example, calculating the possible residues of $x^2+6y^2$ modulo $9$ we see that 

$$R(3n+2,x^2+6y^2) = R(9n+3,x^2+6y^2) = 0,$$ 

\noindent and then \eqref{formula1} implies that $\overline{p}_o(27n) \equiv  \overline{p}_o(3n) \pmod{3}$.  This gives \eqref{first}.  The congruences in \eqref{second1} follow from those in \eqref{first} after replacing $48n+18$ by $3^{2\alpha}(48(3n+2) + 18)$ and $3^{2\alpha}(48(9n+6) + 18)$ in \eqref{formula2}.  We record two more corollaries, which also follow readily from Theorem \ref{ourfirst}.

\begin{cor} \label{cor1}
For all $n \geq 0$ and $\alpha \geq 0$ we have
\begin{equation*}
\overline{p_o}(2^{2\alpha}(An+B)) \equiv 0 \pmod{3},
\end{equation*}
where $An+B = 24n+9$ or $24n+15$.
\end{cor}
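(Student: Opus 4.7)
The plan is to apply Theorem \ref{ourfirst} to recast Corollary \ref{cor1} as a vanishing statement for the representation number $R(N, x^2+6y^2)$, and then to establish this vanishing by local analysis at the prime $2$.

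First observe that $24n+9 = 3(8n+3)$ and $24n+15 = 3(8n+5)$, so in both cases $2^{2\alpha}(An+B) = 3N$ with $N = 2^{2\alpha}(8n+3)$ or $N = 2^{2\alpha}(8n+5)$. Theorem \ref{ourfirst} then gives $\overline{p_o}(3N) \equiv f(N) R(N, x^2+6y^2) \pmod{3}$, and since $f(N) = \pm 1$ it suffices to prove $R(N, x^2+6y^2) = 0$ for these $N$. A direct enumeration over the four parity cases for $(x,y)$ shows $x^2 \pmod 8 \in \{0,1,4\}$ and $6y^2 \pmod 8 \in \{0,6\}$, so $x^2 + 6y^2 \pmod 8 \in \{0,1,2,4,6,7\}$; in particular the residues $3$ and $5$ modulo $8$ are never attained. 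This already handles $\alpha = 0$, since then $N$ itself is $\equiv 3$ or $5 \pmod{8}$.

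For $\alpha \geq 1$ I would run a $4$-descent. If $4 \mid N$ and $N = x^2+6y^2$, then reducing modulo $4$ gives $x^2 \equiv 2y^2 \pmod 4$, which forces $x$ and $y$ both even; hence $N/4 = (x/2)^2 + 6(y/2)^2$, and this descent is clearly reversible. Iterating $\alpha$ times yields $R(N, x^2+6y^2) = R(N/4^\alpha, x^2+6y^2)$, and since $N/4^\alpha$ is $8n+3$ or $8n+5$ the preceding paragraph shows this is $0$. The only step demanding any real care is the residue enumeration modulo $8$ together with the verification that $4$-descent is reversible; beyond that the argument is entirely formal once Theorem \ref{ourfirst} is in hand.
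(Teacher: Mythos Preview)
Your argument is correct. The paper does not spell out a proof of Corollary \ref{cor1}, saying only that it ``follows readily from Theorem \ref{ourfirst},'' and your local analysis at the prime $2$ is precisely the kind of verification the authors have in mind. One minor variant: in place of your $4$-descent one could invoke the explicit formula \eqref{formula1} (equivalently \eqref{x26y2}) to see that replacing $N$ by $4N$ shifts $a$ by $2$ and hence leaves $R(N,x^2+6y^2)$ unchanged; but this is the same observation packaged differently, and your direct descent is just as clean.
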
  

\begin{cor} \label{cor1bis}
If $\ell \equiv 1,5,7$ or $11 \pmod{24}$ is prime, then for all $n$ with $\ell \nmid n$ we have
\begin{equation}
\overline{p}_o(3 \ell^2 n) \equiv 0 \pmod{3}.
\end{equation}
\end{cor}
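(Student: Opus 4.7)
The plan is to apply formula \eqref{formula1} of Theorem \ref{ourfirst} directly with $m = \ell^2 n$, reading off the corollary from the shape of the right-hand side. First I would write out the factorization \eqref{factor} of $m$. Since $\ell$ is prime with $\ell \equiv 1, 5, 7$ or $11 \pmod{24}$, it is one of the allowed primes $p_i$; and since $\ell \nmid n$, its exponent in $m = \ell^2 n$ is exactly $v_\ell = 2$. The remaining data $(a, b, t, \text{other } v_i, w_j)$ of $m$ agree with those of $n$ up to a shift in $t$ by $0$ or $2$, which is irrelevant modulo~$3$.

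Next I would observe that the product $\prod_{i=1}^{r}(1+v_i)$ appearing in \eqref{formula1} contains, in particular, the factor $1+v_\ell = 1+2 = 3$. Since all other factors on the right-hand side of \eqref{formula1} are integers, the whole expression is divisible by $3$. Therefore
\begin{equation*}
\overline{p}_o(3\ell^2 n) \equiv 0 \pmod{3},
\end{equation*}
which is the desired congruence.

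There is no real obstacle here: once Theorem \ref{ourfirst} is in hand, the corollary is simply the remark that a single factor of the multiplicative formula \eqref{formula1} vanishes modulo~$3$. The hypothesis $\ell \nmid n$ is used exactly to force $v_\ell = 2$ (so that $1+v_\ell=3$); if $\ell \mid n$, then the exponent of $\ell$ in $\ell^2 n$ would be $\geq 3$ and no such immediate vanishing of the corresponding factor is guaranteed.
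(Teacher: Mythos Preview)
Your argument is correct and is exactly the derivation the paper has in mind: the paper gives no separate proof but simply notes that Corollaries \ref{cor1} and \ref{cor1bis} ``follow readily from Theorem \ref{ourfirst},'' and your observation that the factor $1+v_\ell=3$ appears in the product in \eqref{formula1} is precisely that ready deduction. The aside about how the remaining factorization data of $\ell^2 n$ compare to those of $n$ is unnecessary (the single factor of $3$ already suffices), but it does no harm.
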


For the functions $\overline{p}(3n)$ and $pod(3n+2)$ we have relations not to binary quadratic forms but to $r_5(n)$, the number of representations of $n$ as the sum of five squares. Our second result is the following.

\begin{thm} \label{oursecond}
For all $n \geq 0$ we have 
$$\overline{p}(3n) \equiv (-1)^nr_5(n) \pmod{3}$$ 
\noindent and 
$$pod(3n+2) \equiv (-1)^{n}r_5(8n+5) \pmod{3}.$$
\noindent Moreover, for all odd primes $\ell$ and $n \geq 0$, we have
\begin{equation} \label{Hecke1}
\overline{p}(3\ell^2n) \equiv \left(\ell - \ell\left(\frac{n}{\ell}\right) + 1\right)\overline{p}(3n) - \ell  \overline{p}(3n/\ell^2) \pmod{3}
\end{equation}
\noindent and
\begin{equation} \label{relation}
(-1)^{n+1}pod(3n+2)  \equiv \overline{p}(24n+15) \pmod{3},
\end{equation}
\noindent where $\left(\frac{\bullet}{\ell}\right)$ denotes the Legendre symbol.
\end{thm}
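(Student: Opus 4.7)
The plan is to derive all four assertions from generating-function identities modulo $3$, leveraging the Frobenius-style congruence $(q;q)_\infty^3 \equiv (q^3;q^3)_\infty \pmod 3$, equivalently $\varphi(-q)^3 \equiv \varphi(-q^3) \pmod 3$, where $\varphi(-q) = \sum_{n \in \mathbb{Z}} (-1)^n q^{n^2}$. Since $\sum_n \overline{p}(n) q^n = 1/\varphi(-q)$, Frobenius gives $1/\varphi(-q) \equiv \varphi(-q)^2/\varphi(-q^3) \pmod 3$, so that
$$\varphi(-q^3)\sum_{n\geq 0}\overline{p}(n)q^n \equiv \varphi(-q)^2 \pmod 3.$$
I would take the part of both sides supported on exponents divisible by $3$: the left becomes $\varphi(-q^3)\sum \overline{p}(3n)q^{3n}$ (as $\varphi(-q^3) \in \mathbb{Z}[[q^3]]$), and on the right, writing $\varphi(-q)^2 = \sum_{a,b\in\mathbb{Z}}(-1)^{a+b}q^{a^2+b^2}$, the condition $a^2+b^2\equiv 0\pmod 3$ forces $3\mid a$ and $3\mid b$ (since nonzero squares mod $3$ equal $1$), leaving $\varphi(-q^9)^2$. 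Substituting $q^3 \mapsto q$ and using the squared Frobenius $\varphi(-q^3)^2 \equiv \varphi(-q)^6 \pmod 3$ yields
$$\sum_{n\geq 0} \overline{p}(3n)q^n \equiv \varphi(-q)^5 \pmod 3,$$
which is the first assertion.

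For the Hecke-type congruence \eqref{Hecke1}, I would rewrite this as $\sum \overline{p}(3n) q^n \equiv \varphi(-q)^2 \varphi(-q^3) \pmod 3$ and interpret the right-hand side as a signed theta series of the ternary form $Q(x,y,z) = x^2+y^2+3z^2$. Indeed, using $a^2+b^2+3c^2 \equiv a+b+c \pmod 2$, the coefficient of $q^m$ in $\varphi(-q)^2\varphi(-q^3)$ is $(-1)^m R(m,Q)$, so $\overline{p}(3m) \equiv (-1)^m R(m,Q) \pmod 3$. The representation numbers $R(\cdot, Q)$ of this ternary form satisfy an Eichler-type recursion expressing $R(\ell^2 n, Q)$ as a combination of $R(n, Q)$ and $R(n/\ell^2, Q)$ with coefficients involving the Legendre symbol $(n/\ell)$. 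Reducing mod $3$ (via $\ell^2\equiv 1$, $\ell^3\equiv\ell \pmod 3$) and translating back through $\overline{p}(3m)\equiv (-1)^m R(m,Q)$---noting that $\ell^2$ is odd so $(-1)^n=(-1)^{\ell^2 n}=(-1)^{n/\ell^2}$---produces \eqref{Hecke1}.

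For the relation \eqref{relation}, I would run the parallel mod-$3$ analysis for $pod$. From
$$\sum pod(n) q^n = \frac{(-q;q^2)_\infty}{(q^2;q^2)_\infty} = \frac{(q^2;q^4)_\infty}{(q;q)_\infty},$$
Frobenius produces a mod-$3$ identity that I would $3$-dissect to isolate $\sum pod(3n+2)q^n$. On the other side, the first assertion applied to $24n+15 = 3(8n+5)$ gives $\overline{p}(24n+15) \equiv -r_5(8n+5) \pmod 3$. Matching the two expressions establishes \eqref{relation}, after which $pod(3n+2) \equiv (-1)^n r_5(8n+5) \pmod 3$ follows at once. I expect the main obstacle to be the Hecke congruence: pinning down the precise Eichler recursion for $R(\cdot, x^2+y^2+3z^2)$ and verifying that, after mod-$3$ reduction, the character collapses to $(n/\ell)$ with the correct signs requires some care. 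The $pod$ dissection should be routine but computationally intricate.
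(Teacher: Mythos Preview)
Your derivation of $\sum_{n\ge 0}\overline{p}(3n)q^n\equiv\varphi(-q)^5\pmod 3$ is correct and is essentially what the paper does (it cites the identity $\sum\overline{p}(3n)q^n\equiv D(q^3)^2/D(q)\pmod 3$ from Hirschhorn--Sellers and then reduces, which is your computation with one more application of Frobenius). The conclusion $\overline{p}(3n)\equiv(-1)^n r_5(n)\pmod 3$ follows exactly as you say.

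The divergence is at \eqref{Hecke1}. You propose passing from $\varphi(-q)^5$ to $\varphi(-q)^2\varphi(-q^3)$ and invoking an Eichler-type recursion for $R(\,\cdot\,,x^2+y^2+3z^2)$, and you correctly flag this as the obstacle: you would need the precise Hecke relation for this ternary theta series, including the bad prime $\ell=3$, and the verification that the character reduces to $(n/\ell)$. The paper sidesteps this entirely. Having already established $\overline{p}(3n)\equiv(-1)^n r_5(n)$, it quotes Cooper's exact identity
\[
r_5(\ell^2 n)=\bigl(\ell^3-\ell\tfrac{(n}{\ell})+1\bigr)r_5(n)-\ell^3 r_5(n/\ell^2)
\]
valid for every odd prime $\ell$, reduces $\ell^3\equiv\ell\pmod 3$, and substitutes $r_5(m)\mapsto(-1)^m\overline{p}(3m)$ (the signs cancel since $\ell^2$ is odd). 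Your detour through the ternary form is unnecessary and is precisely where the uncertainty enters; staying with $r_5$ removes the obstacle you anticipated.

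For the $pod$ part the paper again cites rather than dissects: it uses the identity $\sum_{n\ge 0}(-1)^n pod(3n+2)q^n=\psi(q^3)^3/\psi(q)^4$ from Hirschhorn--Sellers, reduces mod $3$ to $\psi(q)^5$, and then invokes a theorem of Barrucand--Cooper--Hirschhorn that $\psi(q)^5=\sum_{n\ge 0} r_5(8n+5)q^n$. Your proposed direct $3$-dissection of the $pod$ generating function would work but is more laborious than importing these two results.
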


Here we have taken $\overline{p}(3n/\ell^2)$ to be $0$ unless $\ell^2 \mid 3n$. Again there are many ways to deduce congruences.  For example, (\ref{third}) follows readily upon combining \eqref{Hecke1} in the case $\ell = 3$ with the fact that 
$$r_5(9n+6) \equiv 0 \pmod{3},$$
\noindent which is a consequence of the fact that $R(9n+6,x^2+y^2+3z^2) = 0$.  One can check that (\ref{fourth}) follows similarly.  For another example, we may apply \eqref{Hecke1} with $n$ replaced by $n\ell$ for $\ell \equiv 2 \pmod{3}$ to obtain

\begin{cor} \label{cor2}
If $\ell \equiv 2 \pmod{3}$ is prime and $\ell \nmid n$, then
\begin{equation*}
\overline{p}(3\ell^3n) \equiv 0 \pmod{3}.
\end{equation*}
\end{cor}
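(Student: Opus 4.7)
The plan is to apply the Hecke-type congruence \eqref{Hecke1} directly, with the substitution hinted at in the text: replace $n$ by $\ell n$ in \eqref{Hecke1}, which yields
\begin{equation*}
\overline{p}(3\ell^3 n) \equiv \left(\ell - \ell\left(\frac{\ell n}{\ell}\right) + 1\right)\overline{p}(3 \ell n) - \ell\,\overline{p}(3 n/\ell) \pmod{3}.
\end{equation*}
From here the argument is just three observations.

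First, since $\ell \mid \ell n$, the Legendre symbol $\left(\frac{\ell n}{\ell}\right)$ is $0$, so the bracketed coefficient collapses to $\ell + 1$. Second, because $\ell \equiv 2 \pmod{3}$ is prime, $\ell \neq 3$, and together with the hypothesis $\ell \nmid n$ this forces $\ell \nmid 3n$; hence $\ell^2 \nmid 3\ell n$, and by the convention stated just after Theorem \ref{oursecond} the term $\overline{p}(3\ell n/\ell^2) = \overline{p}(3n/\ell)$ vanishes. Third, $\ell \equiv 2 \pmod{3}$ gives $\ell + 1 \equiv 0 \pmod{3}$, so the surviving term $(\ell+1)\overline{p}(3\ell n)$ is already divisible by $3$.

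Combining these three observations yields $\overline{p}(3 \ell^3 n) \equiv 0 \pmod 3$, as required. There is no genuine obstacle here: the work was done in proving \eqref{Hecke1}, and Corollary \ref{cor2} is merely a clean instance obtained by choosing the substitution so that (i) the Legendre-symbol contribution disappears, (ii) the shifted argument $3n/\ell$ falls outside the support of $\overline{p}$, and (iii) the remaining coefficient $\ell + 1$ is annihilated modulo $3$ by the congruence condition on $\ell$.
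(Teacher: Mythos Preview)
Your proof is correct and follows exactly the approach indicated in the paper: substitute $n \mapsto \ell n$ in \eqref{Hecke1}, use that the Legendre symbol vanishes, that the convention kills the $\overline{p}(3n/\ell)$ term since $\ell \nmid 3n$, and that $\ell + 1 \equiv 0 \pmod{3}$.
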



\section{Proofs of Theorems \ref{ourfirst} and \ref{oursecond}}
\begin{proof}[Proof of Theorem \ref{ourfirst}]
On page 364 of \cite{Hi-Se2} we find the identity 
$$
\sum_{n \geq 0} \overline{p}_o(3n) q^n = \frac{D(q^3)D(q^6)}{D(q)^2},
$$
\noindent where 
$$
D(q) := \sum_{n \in \mathbb{Z}} (-1)^nq^{n^2}.
$$
\noindent Reducing modulo $3$, this implies that
\begin{eqnarray*}
\sum_{n \geq 0} \overline{p_o}(3n)q^n &\equiv& \sum_{x,y \in \mathbb{Z}} (-1)^{x+y}q^{x^2+6y^2} \pmod{3} \\
&\equiv & \sum_{n \geq 0} f(n)R(n,x^2+6y^2)q^n \pmod{3}.
\end{eqnarray*}
\noindent Now it is known (see Corollary 4.2 of \cite{Be-Ye1}, for example) that if $n$ has the factorization in \eqref{factor}, then 

\begin{equation} \label{x26y2}
R(n,x^2+6y^2)=(1 + (-1)^{a+b+t}) \prod_{i=1}^{r} (1+v_{i}) \prod_{j=1}^{s} \Biggl( \frac{1 + (-1)^{w_{j}}}{2} \Biggr).
\end{equation}
\noindent This gives \eqref{formula1}. Next, from \cite{An-Hi-Se1} we find the identity
$$
\sum_{n \geq 0} ped(3n+1)q^n = \frac{D(q^3)\psi(-q^3)}{D(q)^2},
$$
\noindent where 
$$
\psi(q) := \sum_{n \geq 0} q^{n(n+1)/2}.
$$
\noindent Reducing modulo $3$, replacing $q$ by $-q^8$ and multiplying by $q^3$ gives 
$$
\sum_{n \geq 0} (-1)^{n+1} ped(3n+1) q^{8n+3} \equiv \sum_{n \geq 0} R(8n+3, 2x^2+3y^2) q^{8n+3} \pmod 3.
$$
\noindent It is known (see Corollary 4.3 of \cite{Be-Ye1}, for example) that if $n$ has the factorization given in \eqref{factor}, then 

\begin{equation*}
R(n,2x^2+3y^2)=(1 - (-1)^{a+b+t}) \prod_{i=1}^{r} (1+v_{i}) \prod_{j=1}^{s} \Biggl( \frac{1 + (-1)^{w_{j}}}{2} \Biggr).
\end{equation*}
\noindent Comparing with \eqref{x26y2} finishes the proof of (\ref{formula2}).
\end{proof}

\begin{proof}[Proof of Theorem \ref{oursecond}] 
On page 3 of \cite{Hi-Se1} we find the identity
$$
\sum_{n \geq 0} \overline{p}(3n) q^n \equiv \frac{D(q^3)^2}{D(q)} \pmod{3}.
$$
Reducing modulo $3$ and replacing $q$ by $-q$ yields
\begin{equation*}
\sum_{n \geq 0} (-1)^n\overline{p}(3n)q^n \equiv \sum_{n \geq 0} r_5(n)q^n \pmod{3}.  
\end{equation*}
It is known (see Lemma 1 in \cite{Co1}, for example) that for any odd prime $\ell$ we have
\begin{equation*} \label{Heckeop}
r_5(\ell^2n) = \left(\ell^3 - \ell\left(\frac{n}{\ell}\right) + 1\right)r_5(n) - \ell^3r_5(n/\ell^2).
\end{equation*}
Here $r_5(n/\ell^2) = 0$ unless $\ell^2 \mid n$.  Replacing $r_5(n)$ by $(-1)^n\overline{p}(3n)$ 
throughout gives \eqref{Hecke1}. Now equation $(1)$ of \cite{Hi-Se3} reads
$$
\sum_{n \geq 0}(-1)^n pod(3n+2) q^n  = \frac{\psi(q^3)^3}{\psi(q)^4}.
$$
Reducing modulo $3$ we have
\begin{eqnarray*}
\sum_{n \geq 0} (-1)^n pod(3n+2) q^n &\equiv& \psi(q)^5 \pmod 3 \\
&\equiv& \sum_{n \geq 0} r_5(8n+5)q^n \pmod{3} \\
&\equiv& - \sum_{n \geq 0} \overline{p}(24n+15) q^n \pmod{3},
\end{eqnarray*}
where the second congruence follows from Theorem 1.1 in \cite{Ba-Co-Hi1}. This implies (\ref{relation}) and thus the proof of Theorem \ref{oursecond} is complete.

\end{proof}

\section*{Acknowledgement}
We would like to thank Scott Ahlgren for pointing out reference \cite{Co1}. The second author was partially funded by Science Foundation Ireland 08/RFP/MTH1081.

 \end{document}